\newtheorem{theorem}{Theorem}
\newtheorem{proposition}[theorem]{Proposition}
\newtheorem{lemma}[theorem]{Lemma}
\newtheorem{conjecture}[theorem]{Conjecture}
\long\def\symbolfootnote[#1]#2{\begingroup%
\def\thefootnote{\fnsymbol{footnote}}\footnote[#1]{#2}\endgroup} 
\newcommand{\C}{\mathbb{C}}
\newcommand{\R}{\mathbb{R}}
\newcommand{\Z}{\mathbb{Z}}
\newcommand{\anglebracket}[1]{\left\langle#1\right\rangle}
\newcommand{\Real}{{\rm Re}}
\newcommand{\rank}{{\rm rank}}
\newcommand{\ba}{{\bf a}}
\newcommand{\bz}{{\bf 0}}
\newcommand{\abs}[1]{\lvert#1\rvert}
\newcommand{\conj}[1]{\overline{#1}}
\newcommand{\Biggabs}[1]{\Bigg\lvert#1\Bigg\rvert}
\newcommand{\stack}[2]{\genfrac{}{}{0pt}{1}{#1}{#2}}
\begin{document}

\title{An infinite family of strongly unextendible \\mutually unbiased bases in~$\C^{2^{2h}}$}
\author{Jonathan Jedwab \and Lily Yen}
\date{16 April 2016}
\maketitle

\symbolfootnote[0]{
J.~Jedwab is with Department of Mathematics, 
Simon Fraser University, 8888 University Drive, Burnaby BC V5A 1S6, Canada.
\par
L.~Yen is with Department of Mathematics and Statistics,
Capilano University, 2055 Purcell Way, North Vancouver BC V7J 3H5, Canada
and Department of Mathematics, 
Simon Fraser University, 8888 University Drive, Burnaby BC V5A 1S6, Canada.
\par
J.~Jedwab is supported by NSERC.
\par
Email: {\tt jed@sfu.ca}, {\tt lyen@capilanou.ca}
\par
2010 Mathematics Subject Classification 05B20, 15A30, 81P45, 94B60
\par
}

\begin{abstract}
A set of $b$ mutually unbiased bases (MUBs) in $\C^d$ (for $d > 1$) comprises $bd$ vectors in $\C^d$, partitioned into $b$ orthogonal bases for $\C^d$ such that the pairwise angle between all vectors from distinct bases is $\arccos(1/\sqrt{d})$. The largest number $\mu(d)$ of MUBs that can exist in $\C^d$ is at most $d+1$, but constructions attaining this bound are known only when $d$ is a prime power. A set of $b$ MUBs in $\C^d$ that cannot be enlarged, even by the first vector of a potential $(b+1)$-th MUB, is called strongly unextendible.
Until now, only one infinite family of dimensions $d$ containing $b(d)$ strongly unextendible MUBs in $\C^d$ satisfying $b(d) < \mu(d)$ was known; this family, due to Sz{\'a}nt{\'o}, is asymptotically ``large'' in the sense that $b(d)/\mu(d) \to 1$ as $d \to \infty$. 
However, the existence of $2^{m-1}+1$ strongly unextendible MUBs in $\C^{2^m}$ for each integer $m > 1$ has been conjectured by Mandayam et al.
We prove their conjecture for all even values of $m$, using only elementary linear algebra. The existence of this ``small'' new infinite family suggests, contrary to widespread belief, that $\mu(d)$ for non-prime-powers $d$ might be significantly larger than the size of particular unextendible sets. 
\end{abstract}

\section{Introduction}
The Hermitian inner product of vectors
$A=(A(x))_{0 \le x < d}$ and $B=(B(x))_{0 \le x < d}$ in $\C^d$ is
$\anglebracket{A,B} = \sum_{x=0}^{d-1}A(x) \conj{B(x)}$. The \emph{angle} between $A$ and $B$ is 
$\arccos(\frac{|\anglebracket{A,B}|}{||A|| \cdot ||B||})$, where $||A|| = \sqrt{\anglebracket{A,A}}$ is the norm of~$A$.
A set of $b$ \emph{mutually unbiased bases} (MUBs) in $\C^d$ (for $d > 1$) comprises $bd$ vectors in $\C^d$, partitioned into $b$ orthogonal bases for $\C^d$ such that the pairwise angle between vectors in distinct bases is $\arccos(1/\sqrt{d})$. 
After applying a unitary transformation and normalizing, we may assume that one of the bases is $\sqrt{d}$ times the standard basis and therefore that each component of each vector in all other bases has unit magnitude.
The angle condition for these other bases is then $|\anglebracket{A,B}|=\sqrt{d}$ for vectors $A,B$ in distinct bases.
For example, the following sets of vectors form 5 MUBs in $\C^4$:
\[
\hspace{-6pt}
\begin{array}{rrrr}
(2 & 0 & 0 & 0) \\
(0 & 2 & 0 & 0) \\
(0 & 0 & 2 & 0) \\
(0 & 0 & 0 & 2) 
\end{array}
\quad
\begin{array}{rrrr}
(1 &  1 &  1 &  1) \\
(1 &  1 & -1 & -1) \\
(1 & -1 &  1 & -1) \\
(1 & -1 & -1 &  1) 
\end{array}
\quad
\begin{array}{rrrr}
(1 &  1 &  i & -i) \\
(1 &  1 & -i &  i) \\
(1 & -1 &  i &  i) \\
(1 & -1 & -i & -i) 
\end{array}
\quad
\begin{array}{rrrr}
(1 &  i &  1 & -i) \\
(1 &  i & -1 &  i) \\
(1 & -i &  1 &  i) \\
(1 & -i & -1 & -i) 
\end{array}
\quad
\begin{array}{rrrr}
(1 &  i &  i & -1) \\
(1 &  i & -i &  1) \\
(1 & -i &  i &  1) \\
(1 & -i & -i & -1) 
\end{array}
\]
Schwinger \cite{Schwinger} introduced MUBs in 1960, noting that when a quantum system is prepared in a state belonging to one basis, all outcomes of measurement with respect to any other basis are equally probable and therefore convey no information about the system.
The term ``mutually unbiased bases'' was introduced by Wootters and Fields in 1989 \cite{Wootters-Fields}.
The MUB property can be exploited in secure quantum key exchange \cite{Bennett-Brassard}, quantum state determination \cite{Ivanovic-geometrical}, quantum state reconstruction \cite{Wootters-Fields}, and detection of quantum entanglement \cite{Spengler-entanglement}; see \cite{Durt-survey} for a comprehensive survey of research on MUBs up to 2010.
There are intriguing connections between MUBs and various combinatorial structures, including finite projective planes \cite{Saniga-Planat-Rosu}, mutually orthogonal Latin squares \cite{Wocjan-Beth}, relative difference sets \cite{Godsil-Roy}, complex Hadamard matrices~\cite{Szollosi-thesis}, and complex equiangular lines \cite{Jedwab-Wiebe-MUBs-lines}. 

The central problem is to determine the largest number $\mu(d)$ of MUBs that can exist in $\C^d$. 
Following Grassl \cite{Grassl-icqft-slides}, we call a set of $b$ MUBs in $\C^d$ that cannot be enlarged to a set of size $b+1$ MUBs \emph{$\C$-unextendible}, and a set that cannot be enlarged by even one vector of a potential $(b+1)$-th MUB \emph{strongly $\C$-unextendible}; in the latter case, we say there is no vector in $\C$ that is unbiased with respect to each vector of the MUBs.
Corresponding definitions apply for MUBs in $\R^d$ and for (strongly) $\R$-unextendible sets.

More than forty years ago, Delsarte, Goethals, and Seidel \cite{DGS-bounds} used Jacobi polynomials to establish an upper bound on $\mu(d)$ and on the corresponding quantity for MUBs in~$\R^d$.
\begin{theorem}{\rm {\cite[Table~I with $\alpha =1/d$ and $\beta=0$]{DGS-bounds}}}
\label{thm:maxnoMUBs}
\mbox{}
\begin{enumerate}[(i)]
\item
The number $\mu(d)$ of MUBs that can exist in $\C^d$ is at most $d+1$.
Every set of $d+1$ MUBs in $\C^d$ is strongly $\C$-unextendible.
\label{CdMUBs}
\item
The number of MUBs that can exist in $\R^d$ is at most $d/2+1$.
Every set of $d/2+1$ MUBs in $\R^d$ is strongly $\R$-unextendible.
\end{enumerate}
\end{theorem}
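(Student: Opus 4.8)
\emph{Proof proposal.}
The cited argument of Delsarte, Goethals, and Seidel proceeds via Jacobi-polynomial linear programming, but I would instead run an elementary dimension count in a space of matrices, which treats the complex and real cases in parallel. First normalize so that a set of $b$ MUBs becomes $bd$ unit vectors $v_{j,k}$, with $1 \le j \le b$ indexing the bases and $1 \le k \le d$ indexing the vectors within a basis, so that $\anglebracket{v_{j,k},v_{j,k'}} = \delta_{k,k'}$ for vectors in the same basis and $\bigabs{\anglebracket{v_{j,k},v_{j',k'}}}^2 = 1/d$ whenever $j \ne j'$. The plan is to encode each vector by the \emph{centered} rank-one projection $M_{j,k} = v_{j,k}v_{j,k}^\dagger - \tfrac1d I$, which lies in the real vector space $\mathcal{H}_0$ of traceless Hermitian $d \times d$ matrices (of real dimension $d^2 - 1$), equipped with the real, positive-definite inner product $\langle X, Y \rangle = \operatorname{Tr}(XY)$.

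The computation at the heart of the argument is $\operatorname{Tr}\big(v_{j,k}v_{j,k}^\dagger \, v_{j',k'}v_{j',k'}^\dagger\big) = \bigabs{\anglebracket{v_{j,k},v_{j',k'}}}^2$; subtracting the $\tfrac1d I$ contributions then gives
\[
\langle M_{j,k}, M_{j',k'} \rangle =
\begin{cases}
(d-1)/d & \text{if } j = j' \text{ and } k = k',\\
-1/d & \text{if } j = j' \text{ and } k \ne k',\\
0 & \text{if } j \ne j'.
\end{cases}
\]
Hence the $d$ matrices coming from a fixed basis have Gram matrix $I_d - \tfrac1d J_d$ (where $J_d$ is all-ones), which has rank $d - 1$, and matrices from distinct bases are mutually orthogonal; so the real span of all the $M_{j,k}$ has dimension exactly $b(d-1)$. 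Since this span lies inside $\mathcal{H}_0$, we conclude $b(d-1) \le d^2 - 1$, that is $b \le d + 1$, which is part~(i). For part~(ii) every vector and matrix is real: replace $\mathcal{H}_0$ by the space of traceless real symmetric $d \times d$ matrices, of dimension $\tfrac{d(d+1)}2 - 1 = \tfrac{(d-1)(d+2)}2$; the identical Gram-matrix calculation then yields $b(d-1) \le \tfrac{(d-1)(d+2)}2$, i.e. $b \le d/2 + 1$.

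To get strong unextendibility, suppose $b$ attains the relevant bound, so the $M_{j,k}$ span all of the ambient traceless space. If some unit vector $w$ were unbiased to every $v_{j,k}$, meaning $\bigabs{\anglebracket{w,v_{j,k}}}^2 = 1/d$ for all $j, k$, then $M_w := ww^\dagger - \tfrac1d I$ would be traceless Hermitian (respectively symmetric) and satisfy $\langle M_w, M_{j,k} \rangle = \bigabs{\anglebracket{w,v_{j,k}}}^2 - 1/d = 0$ for all $j, k$; hence $M_w = 0$, i.e. $ww^\dagger = \tfrac1d I$. This is impossible for $d > 1$, the left side having rank $1$ and the right side rank $d$. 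So no such $w$ exists, and the set of MUBs is strongly unextendible.

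The one genuinely important idea is passing to the \emph{centered} projections $M_{j,k}$ rather than the $v_{j,k}v_{j,k}^\dagger$ themselves: this is exactly what forces the cross-basis inner products to vanish, and without it the dimension bound collapses to something useless. The remaining points are routine bookkeeping — checking that $\operatorname{Tr}(XY)$ restricts to a real positive-definite form on Hermitian matrices, computing the dimension of the traceless symmetric space correctly for part~(ii), and observing that the final rank obstruction genuinely requires $d > 1$.
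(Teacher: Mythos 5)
Your proof is correct, and it is a genuinely different route from the source the paper cites: the paper gives no proof of this theorem, attributing it to Delsarte, Goethals, and Seidel, whose argument runs through Jacobi-polynomial (linear-programming) bounds for systems of lines with prescribed angles $\alpha,\beta$. Your argument is the standard elementary dimension count with centered projections $M_{j,k}=v_{j,k}v_{j,k}^\dagger-\tfrac1d I$ in the traceless Hermitian (resp.\ traceless real symmetric) matrices, and all the steps check out: the Gram matrix $I_d-\tfrac1d J_d$ within a basis has rank $d-1$, cross-basis blocks vanish, positive-definiteness of $\operatorname{Tr}(XY)$ makes the span dimension equal the Gram rank, and comparing with $d^2-1$ (resp.\ $\tfrac{(d-1)(d+2)}{2}$) gives $b\le d+1$ (resp.\ $b\le d/2+1$). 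What your approach buys is that it is elementary and self-contained, and, importantly, it delivers the second sentence of each part almost for free: when the bound is attained the $M_{j,k}$ span the whole ambient traceless space, so any unbiased candidate $w$ forces $ww^\dagger=\tfrac1d I$, a rank contradiction for $d>1$ --- and your restriction to real symmetric matrices in part~(ii) correctly reflects that strong $\R$-unextendibility only concerns real candidate vectors. What the DGS machinery buys instead is generality: their Table~I covers arbitrary angle sets for line systems, of which the MUB bounds are the special case $\alpha=1/d$, $\beta=0$, whereas your count is tailored to exactly this configuration. The only stylistic caveat is that you should state explicitly that a candidate unbiased vector may be normalized to a unit vector (angles are scale-invariant), which you implicitly use when writing $\bigabs{\anglebracket{w,v_{j,k}}}^2=1/d$.
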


The following lower bound on $\mu(d)$, arising from a product construction, is due to Klappenecker and R\"{o}tteler \cite{Klappenecker-Rotteler}. 
\begin{theorem}{\rm {\cite[Lemma~3]{Klappenecker-Rotteler}}}
\label{thm:product}
Let $d, d' > 1$. Then $\mu(dd') \ge \min(\mu(d),\mu(d'))$. 
\end{theorem}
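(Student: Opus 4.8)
The plan is to prove the product construction bound $\mu(dd') \ge \min(\mu(d), \mu(d'))$ by taking a set of $b := \min(\mu(d),\mu(d'))$ MUBs in $\C^d$ and a set of $b$ MUBs in $\C^{d'}$, and forming the Kronecker (tensor) product of bases pairwise to obtain $b$ MUBs in $\C^{dd'}$. Concretely, I would first set up notation: write the $b$ MUBs in $\C^d$ as $\mathcal{B}_0, \dots, \mathcal{B}_{b-1}$ and the $b$ MUBs in $\C^{d'}$ as $\mathcal{B}'_0, \dots, \mathcal{B}'_{b-1}$, and for each $j$ define the candidate basis of $\C^{dd'} \cong \C^d \otimes \C^{d'}$ to be $\mathcal{C}_j := \{\, A \otimes A' : A \in \mathcal{B}_j,\ A' \in \mathcal{B}'_j \,\}$, where $(A \otimes A')(x,y) = A(x)A'(y)$ under the identification of index set $\{0,\dots,dd'-1\}$ with $\{0,\dots,d-1\}\times\{0,\dots,d'-1\}$.

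The key computational fact underpinning everything is the multiplicativity of the Hermitian inner product under tensor products: $\anglebracket{A \otimes A',\, B \otimes B'} = \anglebracket{A,B}\,\anglebracket{A',B'}$, which follows immediately from the definition by factoring the double sum. I would then verify the two required properties in turn. First, \emph{each $\mathcal{C}_j$ is an orthogonal basis of $\C^{dd'}$}: it has $d \cdot d' = dd'$ vectors, and for distinct pairs $(A,A') \ne (B,B')$ in $\mathcal{B}_j \times \mathcal{B}'_j$ at least one of $\anglebracket{A,B}$, $\anglebracket{A',B'}$ vanishes by orthogonality within $\mathcal{B}_j$ or $\mathcal{B}'_j$, so the product vanishes; hence $\mathcal{C}_j$ is an orthogonal set of $dd'$ nonzero vectors and therefore a basis. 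Second, \emph{bases $\mathcal{C}_j$ and $\mathcal{C}_k$ are mutually unbiased for $j \ne k$}: for $A \otimes A' \in \mathcal{C}_j$ and $B \otimes B' \in \mathcal{C}_k$ we have $|\anglebracket{A \otimes A', B \otimes B'}| = |\anglebracket{A,B}| \cdot |\anglebracket{A',B'}| = \sqrt{d}\cdot\sqrt{d'} = \sqrt{dd'}$, using the MUB angle condition in each factor (after the normalization described in the introduction, or more invariantly $|\anglebracket{A,B}|/(\|A\|\|B\|) = 1/\sqrt{d}$ and similarly for the primed factor, together with $\|A\otimes A'\| = \|A\|\,\|A'\|$). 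This is exactly the angle condition in dimension $dd'$.

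There is essentially no hard step here — the proof is a short verification — but the one point that deserves care is making sure the normalization conventions are handled consistently: the introduction assumes one basis is $\sqrt{d}$ times the standard basis, and under the tensor product the distinguished basis of $\mathcal{C}_0$ becomes $\sqrt{d}\sqrt{d'} = \sqrt{dd'}$ times the standard basis of $\C^{dd'}$, so the convention propagates correctly; alternatively one can phrase the whole argument in the scale-invariant form using normalized angles, which avoids any bookkeeping. I would also remark briefly on why $b = \min(\mu(d),\mu(d'))$ rather than, say, $\mu(d)$: we need the \emph{same} number of bases available in both factors to pair them up, so we are limited by the smaller of the two maxima. That completes the argument, and no appeal to Theorem~\ref{thm:maxnoMUBs} is needed — the bound is purely constructive.
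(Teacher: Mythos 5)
Your proof is correct: the pairing of the $j$-th basis in $\C^d$ with the $j$-th basis in $\C^{d'}$, multiplicativity of the inner product under tensor products, and the norm bookkeeping are all handled properly, and this is exactly the standard product construction. Note that the paper itself offers no proof of this statement --- it is quoted from Klappenecker and R\"{o}tteler --- so your argument simply reproduces the argument of the cited source (the ``product construction'' the paper later refers to), and there is nothing to fault in it.
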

The upper bound $d+1$ on $\mu(d)$ in Theorem~\ref{thm:maxnoMUBs}~\eqref{CdMUBs} is attained when $d$ is a prime power \cite{Ivanovic-geometrical}, \cite{Wootters-Fields}. It follows from Theorem~\ref{thm:product} that, for distinct primes $p_1, p_2, \dots, p_r$ and positive integers $a_1, a_2, \dots, a_r$, we have $\mu(p_1^{a_1}p_2^{a_2}\dots p_r^{a_r}) \ge 1+ \min_i p_i^{a_i}$; a stronger lower bound can be obtained for infinitely many dimensions using sets of mutually orthogonal Latin squares~\cite{Wocjan-Beth}.
However, it is not known whether the upper bound on $\mu(d)$ in Theorem~\ref{thm:maxnoMUBs}~\eqref{CdMUBs} is attained for even a single value of $d>1$ that is not a prime power. 
Indeed, the determination of $\mu(d)$ for non-prime-powers $d$ was proposed in 2006 as one of \emph{The ten most annoying questions in quantum computing} (by virtue of having ``caused all would-be climbers to fall flat on their asses''!) \cite{Aaronson-topten}; in 2014, only this question and two others from the original list remained unanswered \cite{Aaronson-toptennew}.
It is therefore interesting to pose the question: When and how can a set of MUBs be extended and, if it cannot, then when and why is it strongly unextendible? We now summarise the few known general results addressing this question.

\begin{theorem}{\rm \cite{Weiner}}
\label{thm:notd} 
Every set of $d$ MUBs in $\C^d$ is extendible to a set of $d+1$ MUBs in $\C^d$ (that is, $\mu(d) \ne d$).
\end{theorem}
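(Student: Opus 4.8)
The plan is to pass from the MUB vectors to their orthogonal projections, observe that $d$ MUBs occupy ``almost all'' of the space of traceless Hermitian matrices, and exhibit the missing $(d+1)$-th basis inside the small leftover subspace. Normalise so that each basis $\mathcal B_j$ ($1\le j\le d$) is orthonormal, encode it by its rank-one orthogonal projections $P^{(j)}_1,\dots,P^{(j)}_d$ (which satisfy $\sum_k P^{(j)}_k=I$), and put $A^{(j)}_k=P^{(j)}_k-\tfrac1d I$. On the real space of Hermitian matrices use the inner product $\langle X,Y\rangle=\mathrm{tr}(XY)$. Mutual unbiasedness of $\mathcal B_i$ and $\mathcal B_j$ ($i\ne j$) says $|\langle v^{(i)}_k,v^{(j)}_l\rangle|^2=\tfrac1d$, i.e. $\mathrm{tr}(P^{(i)}_kP^{(j)}_l)=\tfrac1d$, i.e. $\langle A^{(i)}_k,A^{(j)}_l\rangle=0$. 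Hence each basis gives a $(d-1)$-dimensional subspace $V_j=\mathrm{span}\{A^{(j)}_k:k\}$ of the $(d^2-1)$-dimensional space $\mathcal T$ of traceless Hermitian matrices, the $V_j$ are pairwise orthogonal, and $\sum_j\dim V_j=d^2-d\le d^2-1$ (this re-proves $\mu(d)\le d+1$). Let $W=\mathcal T\ominus(V_1\oplus\cdots\oplus V_d)$, so $\dim W=d-1$ and $\dim(\R I\oplus W)=d$, exactly the dimension of a MASA.

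A $(d+1)$-th MUB is precisely a complete set of rank-one orthogonal projections $Q_1,\dots,Q_d$ whose traceless parts $Q_k-\tfrac1dI$ all lie in $W$: indeed $Q_k-\tfrac1dI\perp A^{(j)}_l$ forces $\mathrm{tr}(Q_kP^{(j)}_l)=\tfrac1d$, so $\{Q_k\}$ is unbiased to every $\mathcal B_j$. So it suffices to show that $\mathcal A:=\R I\oplus W$ is the self-adjoint part of a maximal abelian $\ast$-subalgebra of $M_d(\C)$; its $d$ minimal projections are then the desired $Q_k$. Here is the reduction: if $\mathcal A$ is merely \emph{closed under squaring}, then for Hermitian $B\in W$ and each $n$, writing $B^n=c_nI+w_n$ with $c_n=\tfrac1d\mathrm{tr}(B^n)$ and $w_n\in W$ (so $w_n$ is a polynomial in $B$ and commutes with it), one gets $B^{n+1}=c_nB+B\circ w_n$ with $B\circ w_n=\tfrac12\big((B+w_n)^2-B^2-w_n^2\big)\in\mathcal A$; by induction every power of $B$ lies in $\mathcal A$, so the unital subalgebra generated by any $B\in W$ sits inside $\mathcal A$. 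If in addition $W$ contains a matrix with $d$ distinct eigenvalues, that subalgebra is a $d$-dimensional MASA equal to $\mathcal A$, and we are done. Thus everything reduces to: \textbf{(i)} $\mathrm{tr}(B^2P^{(j)}_k)=\tfrac1d\,\mathrm{tr}(B^2)$ for every Hermitian $B\in W$ and every $P^{(j)}_k$ --- equivalently, $B^2$ has constant diagonal in each of the $d$ given bases whenever $B$ has zero diagonal in each of them; and \textbf{(ii)} $W$ contains a regular Hermitian matrix.

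I expect (i) to be the real obstacle, and it is here that one must use that there are exactly $d$ bases. It is easy when $B$ has only two eigenvalues: writing $B=\lambda_1Q+\lambda_2(I-Q)$, the relation $B\in W$ (which gives $\mathrm{tr}(BP^{(j)}_k)=0$) forces $\mathrm{tr}(QP^{(j)}_k)=-\lambda_2/(\lambda_1-\lambda_2)$, independent of $j$ and $k$, so $\mathrm{tr}(B^2P^{(j)}_k)$ is likewise constant (equivalently, $Q$ itself has its traceless part in $W$). For a general $B\in W$ the eigenvalue pattern is uncontrolled, and the statement genuinely fails with only $d-1$ bases, where $W$ is one dimension larger and not rigid. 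The route I would try first is to show $W$ is spanned over $\R$ by its two-eigenvalue elements, so that (i) propagates from the easy case by polarisation; failing that, I would attack (i) directly through the pinching super-operators $T_j(X)=\sum_k\mathrm{tr}(XP^{(j)}_k)\,P^{(j)}_k$, whose restrictions to $\mathcal T$ are the orthogonal projections onto the $V_j$, so that $\sum_{j=1}^{d}T_j|_{\mathcal T}$ is the orthogonal projection of $\mathcal T$ onto $V_1\oplus\cdots\oplus V_d$; this identity together with $\sum_k P^{(j)}_k=I$ is the kind of relation that should pin down the diagonal of $B^2$.

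Statement (ii) should then follow, since once (i) holds $\mathcal A$ is a Jordan subalgebra of the self-adjoint matrices of dimension $d$ that is orthogonal to the $d$ MASAs $\R I\oplus V_j$, and such a subalgebra cannot consist entirely of matrices with repeated eigenvalues (otherwise it would be a proper subalgebra of itself, generated by a non-regular element). With (i) and (ii) in hand, $\mathcal A=\R I\oplus W$ is a MASA, its minimal projections form the $(d+1)$-th MUB, and $\mu(d)\ne d$ follows.
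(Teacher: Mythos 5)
This statement is quoted by the paper from Weiner's article \cite{Weiner}; the paper itself contains no proof of it, so your proposal can only be judged on its own merits, and as it stands it has a genuine gap at exactly the point where the theorem lives. Your reformulation is fine: passing to traceless parts of the rank-one projections, noting that unbiasedness means Hilbert--Schmidt orthogonality, counting $\dim W = d-1$, and observing that a $(d+1)$-th MUB is the same thing as a complete family of rank-one projections whose traceless parts lie in $W$ --- all of this is correct (and re-proves $\mu(d)\le d+1$), as is the observation that it would suffice to know that $\mathcal A=\R I\oplus W$ is closed under squaring and contains an element with $d$ distinct eigenvalues. But your item (i), closure of $\mathcal A$ under squaring, is precisely the substance of Weiner's theorem, and you do not prove it: you identify it as ``the real obstacle'' and sketch two possible attacks without carrying out either. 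Worse, the first attack could not succeed in the form stated. Condition (i) is quadratic, so even if you knew that $W$ is spanned over $\R$ by its two-eigenvalue elements $B_i$, constancy of $\mathrm{tr}(B_i^2P^{(j)}_k)$ for each $i$ says nothing about the cross terms $\mathrm{tr}\big((B_iB_{i'}+B_{i'}B_i)P^{(j)}_k\big)$ that appear when you square a general $B=\sum_i c_iB_i$; ``polarisation'' needs the Jordan products of pairs to behave, which is exactly what is unproven. Moreover the spanning claim itself is essentially equivalent to the theorem (a posteriori $W$ is spanned by the traceless parts $Q_k-\tfrac1d I$ of the completing basis), so one cannot expect to get it for free. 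The second attack, via the pinching maps $T_j$, is stated only as ``the kind of relation that should pin down the diagonal of $B^2$'', which is a hope, not an argument.

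Your item (ii) is also not established. The sentence ``such a subalgebra cannot consist entirely of matrices with repeated eigenvalues (otherwise it would be a proper subalgebra of itself, generated by a non-regular element)'' is not a valid deduction: a set can fail to contain a regular element without any such self-containment absurdity, and even granting closure under squaring, a $d$-dimensional Jordan subalgebra of Hermitian matrices is not automatically the self-adjoint part of an abelian (or even associative) $\ast$-algebra --- spin factors show that Jordan subalgebras of Hermitian matrices can be genuinely non-associative --- so the step from ``closed under squaring'' to ``MASA'' requires a real argument about where the $d$ given quasi-orthogonal MASAs force the eigenvalue structure of elements of $W$. In short, the framework you set up is a reasonable way to think about the result, but the two claims (i) and (ii) that you defer are the theorem; until at least (i) is proved, the proposal is an outline rather than a proof, and for a complete argument you should consult Weiner's paper \cite{Weiner}, where the completion of $d$ MUBs to $d+1$ is actually established.
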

In view of Theorems~\ref{thm:maxnoMUBs}, \ref{thm:product},~\ref{thm:notd}, the current state of knowledge for the smallest non-prime-power dimension 6 is that $\mu(6) \in \{3, 4, 5, 7\}$.
Several constructions of infinite families of sets of 3 MUBs in $\C^6$ are known
\cite[p.~57]{Zauner-thesis}, \cite[Appendix~B]{Jaming-Pauli}, \cite{Szollosi-two-parameter}, but no set of 4 MUBs in $\C^6$ has been found. Indeed, Zauner \cite[p.~57]{Zauner-thesis} conjectured in 1999 that no such set exists. 
In 2007, Bengtsson \cite{Bengtsson-three-ways} reported ``a growing consensus'' in favour of this conjecture, yet concluded that ``We have almost no evidence either way''. Three years later, Durt et al. \cite{Durt-survey} considered that ``the evidence for [Zauner's] conjecture is overwhelming, but not quite conclusive''.
Two pieces of supporting evidence for the conjecture are: a computational proof that if at least one of a set of 3 MUBs in $\C^6$ is constrained to belong to the ``Fourier family $F(a,b)$'' (a generalization of the Fourier matrix of order~6) then the set is $\C$-unextendible~\cite{Jaming-Pauli}; and a proof that every set of 3 MUBs in $\C^6$ arising from the product construction leading to Theorem~\ref{thm:product} is strongly $\C$-unextendible~\cite{McNulty-Weigert}.

Until now, only one infinite family of dimensions $d$ containing $b(d)$ strongly unextendible MUBs in $\C^d$ satisfying $b(d) < \mu(d)$ was known, due to Sz{\'a}nt{\'o}.
\begin{theorem}{\rm \cite{Szanto}}
\label{thm:szanto}
For each prime $p$ congruent to $3$ modulo~$4$, there exists a set of $p^2-p+2$ strongly $\C$-unextendible MUBs in~$\C^{p^2}$.
For $p = 2, 3, 5, 7, 11$ there also exists a set of $p^2-1$ strongly $\C$-unextendible MUBs in~$\C^{p^2}$.
\end{theorem}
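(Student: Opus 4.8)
The plan is to (i) write down $p^2-p+2$ explicit bases of $\C^{p^2}$ and verify that they are pairwise unbiased, and then (ii) prove the configuration is strongly unextendible, i.e.\ that no vector --- which after normalisation we may take to have all coordinates of modulus~$1$, since one of the bases is the standard basis --- is unbiased to every vector of every basis. The first point to notice is that such a configuration cannot be a proper sub-collection of a complete set of $p^2+1$ MUBs (which exists because $p^2$ is a prime power): any vector taken from a discarded basis would be unbiased to all the retained ones, so the set would be extendible. Hence the construction must be genuinely ``stuck'', not obtained by deleting bases from the standard field construction, and its rigidity has to come from the arithmetic of $\mathbb{F}_{p^2}$ --- which is what makes the residue of $p$ modulo~$4$ relevant. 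I would coordinatise $\C^{p^2}$ by $\mathbb{F}_{p^2}$, write $\psi$ for the canonical additive character $z\mapsto\exp(2\pi i\operatorname{tr}(z)/p)$ with $\operatorname{tr}$ the trace to $\mathbb{F}_p$, and realise each non-standard basis from the rows of a Hadamard-type matrix $H_Q=\big(\psi(Q(x)+\operatorname{tr}(xy))\big)_{x,y\in\mathbb{F}_{p^2}}$, with $Q$ running over a carefully chosen family of quadratic functions on $\mathbb{F}_{p^2}$ of size $p^2-p+1$; the subgroup $\mathbb{F}_p^*\le\mathbb{F}_{p^2}^*$ of order exactly $p-1$ is the natural device producing a count of the shape $p^2-p+\text{const}$.

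For step~(i), unbiasedness of the rows of $H_Q$ and $H_{Q'}$ amounts to $\big|\sum_x\psi\big(Q(x)-Q'(x)+\operatorname{tr}(x(y-y'))\big)\big|=p$ for all $y,y'$, i.e.\ to $Q-Q'$ being a nondegenerate quadratic function; this is a Gauss-sum evaluation, and the real content is to choose the family $\{Q\}$ so that \emph{all} pairwise differences stay nondegenerate while the family remains as large as $p^2-p+1$. For step~(ii), I would convert ``$v$ is unbiased to $H_Q$'' into a statement about the ambiguity function: setting $w_x=v_x\psi(-Q(x))$, unbiasedness is equivalent to $w$ having flat Fourier spectrum, hence to $\sum_x w_{x+u}\overline{w_x}=0$ for every $u\ne 0$, which in terms of $v$ says the mixed autocorrelation $x\mapsto v_{x+u}\overline{v_x}$ is killed by the character attached to $Q$. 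Imposing this for all retained $Q$ forces, for each $u\ne 0$, the ambiguity slice $t\mapsto\sum_x v_{x+u}\overline{v_x}\psi(\operatorname{tr}(tx))$ to be supported on a single proper $\mathbb{F}_p$-subspace; Fourier inversion then makes $x\mapsto v_{x+u}\overline{v_x}$ factor through a linear functional of $x$, and a second-difference argument forces the phase of $v$ to be $\mathbb{F}_p$-quadratic up to a character, so $v$ is a Gauss-type vector. Feeding such a $v$ back into the full list of conditions yields the contradiction: the family $\{Q\}$ is chosen so that no quadratic $Q_v$ can be nondegenerate against every member simultaneously, and it is exactly here that $p\equiv 3\pmod 4$ is used --- then $-1$ is a nonsquare in $\mathbb{F}_p$, so a pivotal quadratic Gauss sum is purely imaginary and cannot equal the required real value, whereas for $p\equiv1\pmod4$ the obstruction disappears. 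The sporadic sets of $p^2-1$ MUBs for $p\in\{2,3,5,7,11\}$ I would check by direct finite computation, no uniform congruence argument being available there.

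The main obstacle I anticipate is step~(ii), and within it the passage from ``$v$ is a Gauss-type vector'' to the final contradiction: one must keep careful track of the degenerate cases --- pairs $u,w$ that are $\mathbb{F}_p$-dependent, the diagonal term $\sum_x|v_x|^2=p^2$, and the lower-order terms of the quadratic phase --- and one has to rule out \emph{every} modulus-$1$ vector, not just the structured ones. A secondary difficulty, and really the creative core of the theorem, is choosing \emph{which} family $\{Q\}$ of quadratic functions to use so that the resulting MUBs are genuinely outside every complete set; arranging the count to be exactly $p^2-p+2$ while keeping all pairwise differences nondegenerate and simultaneously incompatible with any extending Gauss vector is a delicate balance, and I would expect the right choice to be governed by the orbits of $\mathbb{F}_p^*$ --- or of the Galois group of $\mathbb{F}_{p^2}/\mathbb{F}_p$ --- acting on $\mathbb{F}_{p^2}$.
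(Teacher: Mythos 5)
This theorem is not proved in the paper at all: it is quoted from Sz\'ant\'o's article as background, so there is no internal argument to compare yours against, and your proposal has to stand on its own. As it stands it is a research plan rather than a proof, and both of its essential ingredients are missing. In step (i) you never exhibit the family $\{Q\}$ of $p^2-p+1$ quadratic functions, never verify that all pairwise differences are nondegenerate, and never explain why the count comes out to exactly $p^2-p+2$ bases; the remark about $\mathbb{F}_p^*$ having order $p-1$ is a heuristic, not a construction. Your opening observation (that a strongly unextendible set of this size cannot sit inside a complete set of $p^2+1$ MUBs) is correct and is indeed the reason the construction must be ``exotic,'' but it does not by itself produce the bases.

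The more serious gap is in step (ii). From ``$v$ is unbiased to every vector of the basis attached to $Q$'' you correctly get, for each $u\ne 0$, one linear condition $\sum_x v_{x+u}\overline{v_x}\,\psi\big(Q(x+u)-Q(x)\big)=0$, i.e.\ the Fourier transform of $x\mapsto v_{x+u}\overline{v_x}$ vanishes at the single frequency determined by the linear part of $Q(x+u)-Q(x)$. Running over the $p^2-p+1$ retained $Q$ gives finitely many such vanishing conditions per $u$; this does not force the slice to be ``supported on a single proper $\mathbb{F}_p$-subspace,'' and the subsequent jump that $v$ must have $\mathbb{F}_p$-quadratic phase (a ``Gauss-type vector'') is unsupported --- a unimodular vector satisfying finitely many linear autocorrelation constraints need not carry any algebraic structure, and ruling out \emph{every} unimodular vector is exactly the hard part you defer. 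Note that the present paper's own Theorem~\ref{thm:fullrankfamily} is proved precisely by avoiding such a classification: it converts the constraints into a homogeneous linear system in the quantities $\Real\big(A(x)\conj{A(y)}\big)$ and shows the coefficient matrix has full rank, so no structural description of a would-be extending vector is ever needed. Finally, the claimed role of $p\equiv 3\pmod 4$ (``a pivotal Gauss sum is purely imaginary'') is asserted rather than derived, and in the cited source the congruence and the sporadic sizes $p^2-1$ enter through the existence of suitable maximal combinatorial structures (complementary decompositions/maximal partial spreads) from which strong $\C$-unextendibility is deduced, not through a parity obstruction tested only against quadratic-phase candidates. In short: the approach is not unreasonable in spirit, but the construction is absent and the unextendibility argument rests on an unjustified structural reduction, so the proposal does not constitute a proof.
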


The motivation for this paper is provided by two sets of strongly $\C$-unextendible MUBs and an accompanying conjecture recently presented by Mandayam et al. \cite{Mandayam-Pauli}.
\begin{theorem}{\rm {\cite[Section~4]{Mandayam-Pauli}}}
\label{thm:35strong}
There exist $3$ strongly $\C$-unextendible MUBs in $\C^4$, and $5$ strongly $\C$-unextendible MUBs in $\C^8$.
\end{theorem}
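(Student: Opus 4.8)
\quad
I would argue constructively: exhibit explicit MUBs in each dimension and then show, by a finite linear-algebra computation, that no vector is unbiased to all of them.

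For the $\C^4$ statement, fix a concrete triple of MUBs $\mathcal{B}_0,\mathcal{B}_1,\mathcal{B}_2$ with $\mathcal{B}_0$ equal to $2$ times the standard basis. The triple must be chosen with care: it cannot consist of three of a complete set of five MUBs, since then every vector of a fourth MUB would be unbiased to $\mathcal{B}_0,\mathcal{B}_1,\mathcal{B}_2$; so $\mathcal{B}_1,\mathcal{B}_2$ must be selected so that the triple extends to no set of four MUBs, and exhibiting such a triple is the crux of this part. Now suppose $v$ is unbiased to all three bases. Unbiasedness to $\mathcal{B}_0$ forces every coordinate of $v$ to have the same nonzero modulus, so after rescaling $v=(1,a_1,a_2,a_3)$ with $\abs{a_1}=\abs{a_2}=\abs{a_3}=1$. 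Expanding $\abs{\anglebracket{v,w}}^2=\anglebracket{v,w}\conj{\anglebracket{v,w}}$ and using $\abs{a_j}^2=1$, each of the eight conditions $\abs{\anglebracket{v,w}}^2=4$, one for each $w\in\mathcal{B}_1\cup\mathcal{B}_2$, becomes a single real-linear equation in the real and imaginary parts of the unit-modulus quantities $a_j$ and $a_j\conj{a_k}$. Thus unbiasedness to $\mathcal{B}_0,\mathcal{B}_1,\mathcal{B}_2$ amounts to a linear system in these auxiliary variables; the plan is to solve it and then invoke the multiplicative relations $\abs{a_j}=\abs{a_j\conj{a_k}}=1$ and $(a_j\conj{a_k})(a_k\conj{a_\ell})=a_j\conj{a_\ell}$, which I expect to cut the solution set down to finitely many triples $(a_1,a_2,a_3)$, each of which violates one of the original eight conditions.

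The $\C^8$ statement is treated the same way: fix five explicit MUBs $\mathcal{B}_0,\dots,\mathcal{B}_4$ with $\mathcal{B}_0$ the scaled standard basis, reduce a hypothetical unbiased vector to $v=(1,a_1,\dots,a_7)$ with each $\abs{a_j}=1$, and turn unbiasedness to $\mathcal{B}_1,\dots,\mathcal{B}_4$ into the $32$ conditions $\abs{\anglebracket{v,w}}^2=8$, again real-linear in the $a_j$ and the $a_j\conj{a_k}$. To keep this manageable I would index the coordinates by $\Z_2^3$ and write each vector of each $\mathcal{B}_j$ as an additive character of $\Z_2^3$ modulated by a fourth-root-of-unity phase (a power of $i$), so that the coordinate relabellings and sign changes built into the construction act as symmetries of the equation system and collapse the $32$ conditions to a small core; solving that linear system and feeding the solution into the multiplicative relations should, as before, force a contradiction.

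The step I expect to be the real obstacle is precisely this last elimination for $\C^8$: with seven unknowns on the unit circle the linear system is easy to write down, but extracting a short, structure-exploiting chain of deductions ending in a contradiction---rather than an unwieldy case split---is where the argument must be engineered carefully. (The same computation can be repackaged operator-theoretically: a unit vector $v$ is unbiased to the joint eigenbasis of a maximal commuting family $\mathcal{A}$ of Pauli operators exactly when $\anglebracket{v,gv}=0$ for every noncentral $g\in\mathcal{A}$, so strong $\C$-unextendibility of the chosen MUBs says precisely that no trace-one rank-one positive semidefinite matrix has all of the prescribed Pauli expectations equal to zero---this is the same linear algebra supplemented by the positivity constraint.)
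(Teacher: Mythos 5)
The paper does not actually prove this statement: it is quoted from Mandayam et al., and the text explicitly notes that strong $\C$-unextendibility of those two sets (built from maximal commuting classes of Pauli operators) was verified computationally there, using Gr\"obner basis techniques. Judged on its own terms, your proposal has a genuine gap: it is a plan, not a proof. You never exhibit the bases whose choice you yourself call ``the crux'' of the $\C^4$ part, you never carry out the elimination, and for $\C^8$ you concede that the decisive step---deriving a contradiction from the $32$ conditions together with the unit-modulus and multiplicative constraints---is ``the real obstacle'' still to be engineered. Saying the multiplicative relations ``should'' cut the solution set down to finitely many points, each violating an original condition, is precisely what needs to be proved; as written, nothing rules out a genuine solution, i.e., an unbiased vector. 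Note also that your selection criterion for the $\C^4$ triple (not extendible to four full MUBs) is strictly weaker than strong unextendibility, so even a correct choice of triple leaves the main work undone.

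For comparison: the linearization you describe---unbiasedness to the scaled standard basis forces unit-modulus entries, and the remaining conditions become real-linear in $\Real\left(a_j\conj{a_k}\right)$ and $\Imag\left(a_j\conj{a_k}\right)$---is exactly the device the authors use to prove their Theorem~\ref{thm:fullrankfamily}, but there the bases are real (entries $\pm 1$), so only the real parts enter, and a rank argument shows the homogeneous system forces $\Real\left(A(x)\conj{A(y)}\right)=0$ for all $x \ne y$, which is impossible for $2^{2h}\ge 4$ unit-modulus entries. Specializing that theorem to $h=1$ already yields a computation-free proof of the $\C^4$ half of the statement. The $\C^8$ half, however, lies outside that method (the exponent $3$ is odd, the relevant bases involve the phase $i$, and the imaginary parts no longer drop out), and it is exactly there that your sketch offers no argument; in the cited source that case was settled only by explicit Gr\"obner-basis computation.
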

\begin{conjecture}{\rm {\cite[Conjecture~1]{Mandayam-Pauli}}}
\label{conj:family}
For each integer $m > 1$, there exists a set of $2^{m-1}+1$ strongly $\C$-unextendible MUBs in $\C^{2^m}$. 
\end{conjecture}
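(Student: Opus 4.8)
The plan is to pass to the stabilizer picture for the Pauli group on $m$ qubits and thereby reduce strong $\C$-unextendibility to a single combinatorial condition; the stabilizer language is only an organizing device, and the content I need from it is elementary. Identify $\C^{2^m}$ with $(\C^2)^{\otimes m}$ and recall that the full set of $2^m+1$ MUBs coming from the Heisenberg--Weyl construction is indexed by a symplectic spread of $\mathbb{F}_2^{2m}$: each maximal isotropic subspace $A$ (of size $2^m$) labels a maximal commuting family of signed Pauli operators whose common eigenbasis is one MUB. Writing $\rho = vv^{\dagger}/d$ for a normalized candidate unbiased vector $v$ and expanding each stabilizer projector in its Pauli operators, an elementary computation shows that $v$ is unbiased to the basis of $A$ exactly when $\mathrm{tr}(\rho P)=0$ for every nontrivial $P\in A$. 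Consequently, choosing $b=2^{m-1}+1$ members $A_1,\dots,A_b$ of the spread, the resulting set of MUBs is strongly $\C$-unextendible if and only if no pure state $\rho$ satisfies $\mathrm{tr}(\rho P)=0$ for all $P$ in the union $U=\bigcup_j (A_j\setminus\{I\})$. Note that $2^{m-1}+1 = \tfrac{d}{2}+1$ is precisely the Delsarte--Goethals--Seidel bound of Theorem~\ref{thm:maxnoMUBs}(ii) for \emph{real} MUBs, which is the structural hint driving the whole construction.

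For even $m$ I would realise $U$ as the set of all real Pauli operators. Equip $\mathbb{F}_2^{2m}$ with the quadratic form $Q$ counting, modulo $2$, the number of tensor factors equal to $Y$; its associated bilinear form is the symplectic form, a Hermitian Pauli operator is real symmetric exactly when its label is singular, and the nonzero singular labels number $(2^{m-1}+1)(2^m-1)$. Since $m$ is even, $O^+(2m,2)$ carries an orthogonal spread, that is, a partition of its singular points into $2^{m-1}+1$ maximal totally singular subspaces; the corresponding stabilizer bases are real, and their Pauli union $U$ is exactly the set of all nontrivial real symmetric Paulis. Unbiasedness then forces every real-symmetric Pauli coefficient of $\rho$ other than that of $I$ to vanish, so $\mathrm{Re}(\rho)=\tfrac1d I$; writing $v=a+ib$ with $a,b\in\R^d$ gives $aa^{T}+bb^{T}=I$, which is impossible for $d\ge 3$ by comparing ranks ($\le 2$ versus $d$). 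This elementary rank obstruction proves strong $\C$-unextendibility for every even $m$, that is, in each dimension $d=4^{h}$, and it recovers the base case $\C^{4}$ of Theorem~\ref{thm:35strong}.

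For odd $m$ this route is unavailable, and here lies the main obstacle. An orthogonal spread of $O^+(2m,2)$ exists if and only if $m$ is even, so for odd $m$ no selection of $2^{m-1}+1$ members of the spread can have $U$ equal to the real Paulis; equivalently, the dimension $d=2^m$ then admits only a bounded number of real MUBs, and the real-part rank argument collapses entirely. The remaining task is genuinely complex: to select $2^{m-1}+1$ maximal isotropic subspaces whose complementary label set $W$ supports no pure state, and to supply a substitute obstruction. The natural candidate is an uncertainty inequality of the form $\sum_{P\in W}\mathrm{tr}(\rho P)^2 < d-1$ for every pure state $\rho$, which, since $\sum_{P\neq I}\mathrm{tr}(\rho P)^2 = d-1$, would force some chosen basis to fail unbiasedness. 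The case $m=3$ (dimension $\C^{8}$) is already secured by the explicit construction of Theorem~\ref{thm:35strong} and can serve as a seed and consistency check; however, I see no way to propagate it to larger odd $m$, since the product construction behind Theorem~\ref{thm:product} controls only $\mu$, returns $\min(\mu(d),\mu(d'))$ (the wrong count), and need not preserve strong unextendibility. Producing the correct complex sub-spread together with its uncertainty obstruction for all odd $m\ge 5$ is precisely the crux that the clean even-$m$ argument leaves open.
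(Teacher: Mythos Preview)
Your even-$m$ argument is correct and reaches the same endpoint as the paper --- namely that any vector unbiased to the chosen real MUBs must have $\Real(vv^\dagger)=I$, which is impossible by a rank count --- but the route is genuinely different. The paper stays entirely within elementary linear algebra: it takes a Kerdock bent set $\{g_1,\dots,g_{2^{2h-1}}\}$ on $\Z_2^{2h}$ (with $g_1=0$), writes the unbiasedness conditions as a homogeneous linear system in the real variables $a_{x,y}=\Real\big(A(x)\conj{A(y)}\big)$, and proves that the coefficient matrix has full rank by partitioning its columns according to $\ell=x+y$ and invoking the balanced-derivative property of bent functions (Proposition~\ref{prop:balanced-derivative}). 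Your argument instead exploits the structural fact, via orthogonal spreads of $O^+(2m,2)$, that the union of the chosen stabilizers is exactly the set of all nontrivial real symmetric Paulis, so the trace conditions force $\Real(\rho)$ to be scalar without any rank computation on a coefficient matrix. The two approaches are linked by the Calderbank--Cameron--Kantor--Seidel correspondence between Kerdock sets and orthogonal spreads; yours is more conceptual and shorter once that background is in place, while the paper's is self-contained and needs nothing beyond the definition of a bent function. You are also right that the odd-$m$ case is the genuine obstacle: the paper likewise establishes only the even case (Theorem~\ref{thm:fullrankfamily}) and leaves the full Conjecture~\ref{conj:family} open.
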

Strong $\C$-unextendibility in Theorem~\ref{thm:35strong} was verified computationally in \cite{Mandayam-Pauli}, using Gr\"{o}bner basis techniques.
The sets of MUBs in Theorem~\ref{thm:35strong} were constructed from maximal commuting classes of Pauli operators, and Conjecture~\ref{conj:family} was stated in \cite{Mandayam-Pauli} as holding specifically for such sets.
K.~Thas \cite{Thas} subsequently showed that Conjecture~\ref{conj:family} is false for all $m > 3$ when this restriction is applied, but proposed that the conjecture holds for all $m>1$ using MUBs constructed from complete partial spreads \cite[Conjecture~8.6]{Thas}.

The main result of ths paper is Theorem~\ref{thm:fullrankfamily}, which establishes Conjecture~\ref{conj:family} (without reference to complete partial spreads) for all even~$m$.
\begin{theorem}
\label{thm:fullrankfamily}
\mbox{}
For each integer $h \ge 1$, there exists a set of $2^{2h-1}+1$ strongly $\C$-unextendible MUBs in $\C^{2^{2h}}$. 
\end{theorem}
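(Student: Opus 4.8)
The plan is to work in $\C^{2^{2h}}$ with coordinates indexed by $\Z_2^{2h}$, identified as $V = V_1 \oplus V_2$ with $V_1 = V_2 = \Z_2^{h}$, and to build the $2^{2h-1}+1$ bases very explicitly. One basis is the standard basis (scaled by $d = 2^{2h}$). The remaining $2^{2h-1}$ bases should be labelled by vectors of a suitable set $S$; I expect the natural choice to be a set of $2^{2h-1}$ functions or symmetric matrices over $\Z_2^{h}$ — concretely, the $2^{h-1}(2^{h}+1)$ \emph{alternating} (zero-diagonal) bilinear forms give too many, so instead the basis labels should be the $2^{2h-1}$ cosets arising from a fixed maximal isotropic structure, or equivalently the graphs of $\Z_2$-linear maps $V_1 \to V_2$ plus a careful choice of characters. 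Each vector in the $s$-th basis is then a ``quadratic-phase'' vector whose $x$-component is $(-1)^{Q_s(x) + \ell(x)}$ for a quadratic form $Q_s$ depending on $s$ and a linear functional $\ell$ ranging over $\widehat{V}$; mutual unbiasedness between bases $s$ and $s'$ reduces to a Gauss-sum / Weil-type estimate, namely that $\sum_{x \in V}(-1)^{(Q_s - Q_{s'})(x) + m(x)}$ has absolute value exactly $\sqrt{d}$ for all $m$, which holds precisely when $Q_s - Q_{s'}$ is a nondegenerate quadratic form. So the first task is to exhibit a family $\{Q_s\}_{s \in S}$ of $2^{2h-1}$ quadratic forms on $\Z_2^{2h}$ whose pairwise differences are all nondegenerate; this is where the even-dimension hypothesis enters, since over $\Z_2$ nondegenerate quadratic forms in an odd number of variables do not exist, and in $2h$ variables one has exactly two equivalence classes (Arf invariant $0$ and $1$), both of which occur among the differences — a $\Z_2$-affine-geometry packing argument, or a direct Kerdock-code style construction, should furnish the required set of size $2^{2h-1}$.

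Granting the construction, the bulk of the proof is the strong $\C$-unextendibility claim: no vector $w \in \C^{d}$ can be unbiased with respect to all $d \cdot (2^{2h-1}+1)$ basis vectors simultaneously. Unbiasedness with respect to the standard basis forces $\abs{w(x)} = 1$ for all $x$, so write $w(x) = \zeta(x)$ with $\abs{\zeta(x)}=1$. Unbiasedness with respect to the $s$-th basis says that for every linear $\ell$, $\bigabs{\sum_{x}\zeta(x)\,(-1)^{Q_s(x)+\ell(x)}}^2 = d$. Summing this identity over all $\ell \in \widehat{V}$ and using orthogonality of characters collapses the left side to $\sum_{x}\abs{\zeta(x)}^2 \cdot (\text{count})$, which is automatically $d$; the real content comes from looking at the individual $\ell$, or better, from forming the function $F_s(y) = \sum_{x} \zeta(x)\conj{\zeta(x+y)}(-1)^{Q_s(x) - Q_s(x+y)}$, whose values encode all the $\ell$-conditions via a finite Fourier transform. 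The unbiasedness conditions for basis $s$ are equivalent to $\abs{F_s(y)}$ being supported — after transform — uniformly; I expect the clean reformulation to be that the autocorrelation-type function $G(y) = \zeta(x)\conj{\zeta(x+y)}$ must, when twisted by the bilinear form $b_s(x,y)$ associated to $Q_s$, have a flat spectrum for every $s \in S$. The key combinatorial step is then: the set of bilinear forms $\{b_s - b_{s'}\}$ together with the requirement that $G$ be flat under all of these twists forces $G$ to be identically zero off the origin, i.e. $\sum_x \zeta(x)\conj{\zeta(x+y)} = 0$ for all $y \neq 0$ — but that is impossible since it would make the $d$ translates of $\zeta$ an orthogonal set of norm-$\sqrt{d}$ vectors that are \emph{also} all unbiased to the standard basis and to each $Q_s$-basis, producing $d+2$ MUBs and contradicting Theorem~\ref{thm:maxnoMUBs}\eqref{CdMUBs} (or, more elementarily, contradicting a direct counting/rank argument).

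The main obstacle, and where I expect the real work to lie, is the middle step: proving that flatness of $G$ under the twists $b_s - b_{s'}$ for all pairs in $S$ is so restrictive that it kills $G$. This is essentially a statement that the family of symmetric bilinear forms $\{b_s\}$ is ``spread out'' enough in the space of alternating forms on $\Z_2^{2h}$ that no nonzero phase function $\zeta$ can accommodate all of them; I anticipate proving it by a rank argument on the $\Z_2$-bilinear forms — the differences $b_s - b_{s'}$ should span, or nearly span, the space of alternating forms, and a single phase function's autocorrelation can be simultaneously ``diagonalised'' by at most a bounded number of incompatible forms. A convenient device is to pass to the associated quadratic residue: fix $y$, consider the function $x \mapsto \zeta(x)\conj{\zeta(x+y)}$, and observe that the flatness conditions say its restriction to each coset of $\ker(b_s(\cdot,y))$ behaves like a character; intersecting these constraints over enough $s$ pins the function down. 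I would structure this as a lemma: \emph{if $\zeta: V \to \{z : \abs z = 1\}$ satisfies the flat-spectrum condition for the difference forms of the set $S$, then $\zeta$ is a quadratic character, hence one of the basis vectors already in our list} — and since a vector equal to an existing basis vector cannot be unbiased to its own basis, we are done. Proving this lemma rigorously, purely with elementary linear algebra over $\Z_2$ as the abstract promises, is the crux; everything before it (the construction and the reduction to Gauss sums) and after it (the contradiction) should be routine.
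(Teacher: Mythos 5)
Your construction step essentially coincides with the paper's: quadratic-phase (Kerdock-type) vectors whose pairwise differences are bent/nondegenerate do produce the $2^{2h-1}+1$ real MUBs (this is Proposition~\ref{prop:benttoMUBs} applied to the bent set of Theorem~\ref{thm:kerdock}). The genuine gap is in the unextendibility argument, which is the entire content of the theorem: you explicitly leave the crux as an unproven lemma, and the two conclusions you sketch are mutually inconsistent and individually fail to yield a contradiction. If the twisted-autocorrelation ("flatness") conditions did force $\sum_x\zeta(x)\conj{\zeta(x+y)}=0$ for all $y\neq 0$, this would not be impossible: the $d$ translates of $\zeta$ would form just one additional orthogonal basis, giving $2^{2h-1}+2$ MUBs, not $d+2$, so there is no conflict with the bound $\mu(d)\le d+1$ of Theorem~\ref{thm:maxnoMUBs}~\eqref{CdMUBs} (and unimodular vectors with all translates orthogonal certainly exist, e.g.\ $\zeta=(-1)^{g}$ with $g$ bent). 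Your alternative ending ("$\zeta$ must be a quadratic character, hence one of the basis vectors already in the list") is both unproven and insufficient: a quadratic character \emph{not} in the chosen set is not excluded by anything you write (one would need the real bound of Theorem~\ref{thm:maxnoMUBs}~(ii) for that case), and genuinely complex $\zeta$ are not addressed at all.

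The paper closes exactly this gap with an elementary rank computation that your proposal never reaches. From $\bigabs{\sum_x A(x)(-1)^{g_j(x)+u\cdot x}}=2^h$, squaring and removing the diagonal gives a homogeneous \emph{linear} system in the real unknowns $a_{x,y}=\Real\big(A(x)\conj{A(y)}\big)$, $x<y$. Because $g_1=0$, the rows with $j=1$ decouple the coefficient matrix into blocks indexed by $\ell=x+y\neq 0$ (Lemma~\ref{lem:lin-ind}), and within each block the rows $\big((-1)^{g_j(x)+g_j(x+\ell)}\big)_{x<\ell+x}$ are pairwise orthogonal because $g_j+g_k$ is bent (Proposition~\ref{prop:balanced-derivative}); hence the matrix has full column rank $2^{2h-1}(2^{2h}-1)$ and every $a_{x,y}$ vanishes. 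This pointwise conclusion $\Real\big(A(x)\conj{A(y)}\big)=0$ for all $x<y$ is far stronger than your summed autocorrelation statement, and it is immediately contradictory for $2^{2h}\ge 4$ unimodular entries, since at most two unit phases can be pairwise at angle $\pi/2$ modulo $\pi$. Without an argument of this pointwise strength, your middle step (and hence the proof) is missing.
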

Our proof of Theorem~\ref{thm:fullrankfamily} uses only elementary linear algebra, and does not rely at all on computation. We specify the sets of MUBs described in Theorem~\ref{thm:fullrankfamily} explicitly; in fact, they are MUBs in $\{1,-1\}^{2^{2h}}$ that have long been known to attain the upper bound of Theorem~\ref{thm:maxnoMUBs}~(ii) when $d$ is a power of~$4$ (see Proposition~\ref{prop:benttoMUBs} and Theorem~\ref{thm:kerdock}). The new and surprising result is that these MUBs, which are strongly $\R$-unextendible by Theorem~\ref{thm:maxnoMUBs}~(ii), are also strongly $\C$-unextendible.
Theorem~\ref{thm:fullrankfamily} gives the first known infinite family of $b(d)$ strongly $\C$-unextendible MUBs in $\C^d$ for which $\lim_{d \to \infty} b(d)/\mu(d) < 1$. The existence of this family suggests that caution is warranted, for example, in interpreting the existence of sets of 3 $\C$-unextendible MUBs in $\C^6$ \cite{Jaming-Pauli}, \cite{McNulty-Weigert} as evidence that $\mu(6) < 7$, especially when the sets are constrained to satisfy some structural condition; indeed, we see that for $d=2^{2h}$ there exist sets of $d/2+1$ strongly $\C$-unextendible MUBs in $\C^d$ (constrained actually to lie in $\R^d$) even though $\mu(d) = d+1$.

In Section~\ref{sec:prelims}, we shall provide required background on Boolean functions and bent functions, including short proofs of some known results with the intention of making the paper more accessible. 
In Section~\ref{sec:proof}, we shall prove Theorem~\ref{thm:fullrankfamily}.

\section{Boolean functions and bent functions}
\label{sec:prelims}

A \emph{Boolean function on $\Z_2^m$} is a function $g:\Z_2^m \to \Z_2$.
The corresponding vector $(g(x))_{x \in \Z_2^m} \in \Z_2^{2^m}$ is the evaluation of $g(x)$ at the $2^m$ points of $\Z_2^m$ taken in lexicographic order.
For example, the vector corresponding to the Boolean function 
$g(x_1,x_2,x_3,x_4) = x_1x_2 + x_1x_3 + x_2x_4$ on $\Z_2^4$ is
$(0 0 0 0 0 1 0 1 0 0 1 1 1 0 0 1)$, whose initial element is $g(0,0,0,0)$ and whose final element is $g(1,1,1,1)$. See \cite{Carlet} and \cite{Carlet-Mesnager}, for example, for detailed background on Boolean functions.

The \emph{Walsh-Hadamard transform} of a Boolean function $g$ on $\Z_2^m$ is the function $\widehat{g}:\Z_2^m\to\Z$ given by
\[
\widehat{g}(u) = \sum_{x\in\Z_2^m} (-1)^{g(x)+u\cdot x} \quad \mbox{for $u \in \Z_2^m$},
\]
where $\cdot$ is the usual inner product in $\Z_2^m$.
A Boolean function $g$ on $\Z_2^m$ is \emph{bent} if
\[
\widehat{g}(u) \in \{2^{m/2}, -2^{m/2}\} \quad \mbox{for all $u \in \Z_2^m$}.
\]
Bent functions exist for all positive even integers~$m$.

A \emph{bent set on $\Z_2^{2h}$} is a finite set of Boolean functions on $\Z_2^{2h}$ for which the sum of any two distinct functions in the set is bent. 
We may assume (by adding one function to all the others) that one element of the set is the zero function, and then all the other elements are themselves bent.
For example, a bent set of size 8 on $\Z_2^4$ is given by the Boolean functions 
\begin{align*}
& 0, \quad
x_1x_2 + x_3x_4, \quad
x_1x_2 + x_1x_3 + x_1x_4 + x_2x_3, \quad 
x_1x_2 + x_1x_3 + x_2x_4, \quad
x_1x_2 + x_1x_4 + x_2x_3 + x_2x_4, \\
&x_1x_3 + x_1x_4 + x_2x_4 + x_3x_4, \quad
x_1x_3 + x_2x_3 + x_2x_4 + x_3x_4, \quad
x_1x_4 + x_2x_3 + x_3x_4.
\end{align*}
A bent set on $\Z_2^{2h}$ can be used to construct a set of real MUBs in $\{1,-1\}^{2^{2h}}$, as we now describe.
Write $I[\cdot]$ for the indicator function.

\begin{proposition}{\rm \cite{Cameron-Seidel} (see also \cite{Calderbank-Z4-Kerdock}, \cite{Kantor-codes})}
\label{prop:benttoMUBs}
Suppose $\{g_1, g_2, \dots, g_r\}$ is a bent set on~$\Z_2^{2h}$. 
Then $2^h$ times the standard basis for $\R^{2^{2h}}$, together with the $r$ sets of $2^{2h}$ vectors
$\big\{\big((-1)^{g_j(x)+ u \cdot x}\big)_{x \in \Z_2^{2h}} : u \in \Z_2^{2h}\big\}$ for $1 \le j \le r$, form $r+1$ MUBs in $\{1,-1\}^{2^{2h}}$.
\end{proposition}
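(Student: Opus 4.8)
The plan is to verify the three defining conditions of a set of MUBs directly: (i) each of the listed sets of $2^{2h}$ vectors is an orthogonal basis for $\R^{2^{2h}}$; (ii) every pair of vectors from two distinct bases has inner product of absolute value $2^h = \sqrt{d}$; and (iii) the first basis, $2^h$ times the standard basis, is trivially unbiased with respect to all the others since each vector in the $j$-th set has every component equal to $\pm 1$.

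First I would address orthogonality within the $j$-th basis. Fix $j$ and take distinct $u, u' \in \Z_2^{2h}$. The inner product of the corresponding vectors is $\sum_{x} (-1)^{g_j(x)+u\cdot x}(-1)^{g_j(x)+u'\cdot x} = \sum_x (-1)^{(u+u')\cdot x}$, and since $u + u' \ne \bz$ this character sum over $\Z_2^{2h}$ vanishes. So each of the $r$ sets is an orthogonal set of $2^{2h}$ vectors, each of squared norm $2^{2h}$, hence an orthogonal basis for $\R^{2^{2h}}$.

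Next I would handle the unbiasedness between the $j$-th and $k$-th bases for $j \ne k$. Take any $u, u' \in \Z_2^{2h}$; the relevant inner product is
\[
\sum_{x \in \Z_2^{2h}} (-1)^{g_j(x)+u\cdot x}(-1)^{g_k(x)+u'\cdot x}
= \sum_{x \in \Z_2^{2h}} (-1)^{(g_j+g_k)(x) + (u+u')\cdot x}
= \widehat{g_j+g_k}(u+u').
\]
Because $\{g_1, \dots, g_r\}$ is a bent set and $j \ne k$, the function $g_j + g_k$ is bent, so this Walsh-Hadamard value lies in $\{2^h, -2^h\}$; its absolute value is $2^h = \sqrt{d}$, as required. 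Finally, the bound $r+1 \le d+1$ is automatic, and together with condition (iii) above this completes the verification that we have $r+1$ MUBs lying in $\{1,-1\}^{2^{2h}}$.

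There is no real obstacle here: the only slightly delicate point is bookkeeping the two cases ``same basis'' versus ``distinct bases'' so that the character-sum computation and the bent-function computation are invoked correctly, and observing that the ``zero function'' normalization mentioned before the proposition is not needed — the argument works for an arbitrary bent set because only the pairwise sums $g_j+g_k$ enter. I would also remark that the case $j$ or $k$ corresponding to the standard basis is subsumed by condition (iii).
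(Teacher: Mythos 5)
Your proposal is correct and follows essentially the same route as the paper: orthogonality within each basis via the character sum $\sum_x (-1)^{(u+u')\cdot x}=0$, and unbiasedness between distinct bases by recognizing the inner product as the Walsh--Hadamard value $\widehat{g_j+g_k}(u+u')$ of a bent function. Your explicit remark that unbiasedness with the scaled standard basis is immediate from the $\pm 1$ entries is a point the paper leaves implicit, but it is the same argument.
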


\begin{proof}
For each $j$, the $2^{2h}$ vectors of $B_j = \big\{\big((-1)^{g_j(x)+ u \cdot x}\big)_{x \in \Z_2^{2h}} : u \in \Z_2^{2h}\big\}$ form an orthogonal basis for~$\R^{2^{2h}}$, because for distinct $u, v \in \Z_2^{2h}$ we have
\[
\anglebracket{\big((-1)^{g_j(x)+u\cdot x}\big)_x, \big((-1)^{g_j(x)+v\cdot x}\big)_x} =
\sum_{x\in\Z_2^{2h}} (-1)^{(u+v)\cdot x} = 0
\]
using the identity
\begin{equation}
\sum_{w \in \Z_2^m} (-1)^{w\cdot z} = 2^m I[z = 0] \quad \mbox{for all $z \in \Z_2^m$}.
\label{indicator}
\end{equation}

The vectors from distinct bases $B_j$ and $B_k$ are mutually unbiased, because for $u, v \in \Z_2^{2h}$ we have
\[
\anglebracket{\big((-1)^{g_j(x)+u\cdot x}\big)_x, \big((-1)^{g_k(x)+v\cdot x}\big)_x} =
\sum_{x\in\Z_2^{2h}} (-1)^{(g_j+g_k)(x)+(u+v)\cdot x} =
\widehat{g_j+g_k}(u+v),
\]
which has magnitude $\sqrt{2^{2h}}$ because $g_j+g_k$ is a bent function on $\Z_2^{2h}$ for distinct $j,k$.
\end{proof}

The following existence result for bent sets is due to Kerdock~\cite{Kerdock}.

\begin{theorem}{\rm \cite{Kerdock}, \cite[p.~456]{MacWilliams-Sloane}} 
\label{thm:kerdock}
For each integer $h \ge 1$, there exists a bent set of size $2^{2h-1}$ on $\Z_2^{2h}$.
\end{theorem}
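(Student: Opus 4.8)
The plan is to translate the bent-set condition into a statement about symplectic (alternating) bilinear forms, and then to realize a maximal family of such forms by a finite-field construction. Since adding one fixed function to every member of a bent set preserves the defining property, I would first normalise so that the set consists of the zero function together with $2^{2h-1}-1$ quadratic Boolean functions $g_2,\dots,g_r$; it then suffices to arrange that $g_i$ is bent for each $i$ and that $g_i+g_j$ is bent for all distinct $i,j$. The essential reduction is the elementary lemma that a quadratic Boolean function $Q$ on $\Z_2^{2h}$ is bent if and only if its polar form $B_Q(x,y)=Q(x+y)+Q(x)+Q(y)$, an alternating $\mathbb{F}_2$-bilinear form, is non-degenerate. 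I would prove this directly from the definition of $\widehat{Q}$: a standard completion-of-square (translation) argument shows that $\abs{\widehat{Q}(u)}$ equals either $0$ or $2^{(2h+\dim R)/2}$, where $R$ is the radical of $B_Q$, so the flat spectrum $\abs{\widehat{Q}(u)}=2^h$ for all $u$ holds exactly when $R=0$. Because polarization is $\mathbb{F}_2$-linear in $Q$, we have $B_{g_i+g_j}=B_{g_i}+B_{g_j}$, and the whole problem collapses to producing $2^{2h-1}$ alternating forms on $\Z_2^{2h}$ whose pairwise differences are all non-degenerate.

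The cleanest way to do this is as an $\mathbb{F}_2$-linear space $W$ of alternating forms of dimension $2h-1$ in which every nonzero form is non-degenerate; choosing, for each $B\in W$, any quadratic form with polar form $B$ then yields the required bent set. That $2h-1$ is the correct dimension is reassuring and itself elementary: fixing $x_0\neq 0$, the map $B\mapsto B(x_0,\cdot)$ sends $W$ injectively into the hyperplane $\{f:f(x_0)=0\}$ of the dual space (injectivity is exactly non-degeneracy of nonzero differences, and the image lands in that hyperplane because $B(x_0,x_0)=0$), forcing $\dim W\le 2h-1$. Kerdock's theorem asserts this bound is attained.

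To construct $W$ I would identify $\Z_2^{2h}$ with the field $\mathbb{F}_{2^{2h}}$, write $q=2^h$ and $T=\operatorname{Tr}_{\mathbb{F}_{2^{2h}}/\mathbb{F}_2}$, and represent each alternating form as $B_\phi(x,y)=T(x\,\phi(y))$ for a self-adjoint $\mathbb{F}_2$-linear map $\phi$ whose associated form has zero diagonal, expressed as a linearized polynomial in the Frobenius powers $x\mapsto x^{2^i}$. One immediately obtains an $h$-dimensional subspace with all nonzero members non-degenerate from the maps $x\mapsto c\,x^{q}$ with $c\in\mathbb{F}_{q}$, since such a map is bijective precisely when $c\neq 0$. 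Kerdock's construction adjoins $h-1$ further generators drawn from the lower Frobenius levels so that no nontrivial $\mathbb{F}_2$-combination becomes singular.

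The main obstacle is exactly this last point: proving that every nonzero form in the enlarged span is non-degenerate. After the reduction above, each degeneracy condition amounts to the vanishing of a linearized polynomial at some nonzero point, and the crux is to show, using the arithmetic of $\mathbb{F}_{2^{2h}}$ (in particular that the relevant linearized polynomials have only the trivial root, governed by suitable $\gcd$ conditions on the Frobenius exponents), that no such nonzero point exists for any nonzero element of $W$. This is the genuinely delicate, field-theoretic heart of Kerdock's theorem. If a fully self-contained elementary treatment of this step proved too long for a preliminaries section, I would instead invoke the equivalent Galois-ring ($\Z_4$-linear) description of the Kerdock set \cite{Calderbank-Z4-Kerdock}, for which the non-degeneracy of the pairwise differences is transparent, and transport it back into the bent-set language through the polar-form lemma above.
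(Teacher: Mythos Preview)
The paper does not prove this theorem: it is stated as a classical result and attributed to Kerdock~\cite{Kerdock} and MacWilliams--Sloane~\cite[p.~456]{MacWilliams-Sloane}, with no argument given. So there is no ``paper's own proof'' to compare against.

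Your outline is the standard route to the result and is structurally sound. The reduction via the polar-form lemma (a quadratic $Q$ on $\Z_2^{2h}$ is bent iff $B_Q$ is non-degenerate) is correct, and your dimension bound $\dim W\le 2h-1$ via $B\mapsto B(x_0,\cdot)$ is the usual one. One phrasing point: restricting to \emph{quadratic} Boolean functions is a constructive choice, not a normalisation---you are free to build the bent set from quadratics, but this is not forced by translating by a fixed function. On the construction side, your $h$-dimensional piece $\{x\mapsto c x^{q}:c\in\mathbb{F}_q\}$ is fine (self-adjointness and alternation both follow from $c\in\mathbb{F}_q$), but ``adjoin $h-1$ further generators from the lower Frobenius levels'' is where all the content lies, as you yourself flag. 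The cleanest self-contained descriptions of a full $(2h{-}1)$-dimensional Kerdock set over $\mathbb{F}_2$ are either (i) the explicit family of skew-symmetric matrices in \cite[Ch.~15]{MacWilliams-Sloane}, where non-singularity of every nonzero member is checked directly, or (ii) the $\Z_4$-linear/Galois-ring picture of \cite{Calderbank-Z4-Kerdock}, which you mention as a fallback; either reference would suffice to close the gap you identify. For the purposes of this paper, simply citing the result---as the authors do---is entirely appropriate.
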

Application of Proposition~\ref{prop:benttoMUBs} to the bent set of Theorem~\ref{thm:kerdock} produces a set of $2^{2h-1}+1$ MUBS in $\{1,-1\}^{2^{2h}}$, which attains the upper bound in Theorem~\ref{thm:maxnoMUBs}~(ii) for the number of MUBs in $\R^d$ when $d = 2^{2h}$. 

We require two further auxiliary results. Write $(\Z_2^m)^*$ for $\Z_2^m \setminus \{0\}$.

\begin{proposition}{\rm {\cite[p.79]{Carlet}}}
Suppose $g(x)$ is a bent function on $\Z_2^{2h}$, and let $a \in (\Z_2^{2h})^*$. Then
\[
\sum_{x \in \Z_2^{2h}} (-1)^{g(x)+g(x+a)} = 0. 
\]
\label{prop:balanced-derivative}
\end{proposition}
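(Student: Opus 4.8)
The statement is the classical fact that every first-order derivative $D_a g(x) = g(x) + g(x+a)$ of a bent function $g$ is balanced when $a \ne 0$. The plan is to express the given sum --- the autocorrelation of the $\pm 1$ vector $\big((-1)^{g(x)}\big)_{x \in \Z_2^{2h}}$ at shift $a$ --- in terms of the Walsh--Hadamard spectrum of $g$, and then apply the bentness hypothesis.

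First I would compute, for an \emph{arbitrary} Boolean function $g$ on $\Z_2^{2h}$, the quantity $\sum_{u \in \Z_2^{2h}} \widehat{g}(u)^2\,(-1)^{u \cdot a}$ by expanding both factors $\widehat{g}(u)$ as sums over $\Z_2^{2h}$ and interchanging the order of summation. The resulting inner sum over $u$ collapses, via the identity~\eqref{indicator}, to $2^{2h}$ times an indicator that forces the two expansion variables $x$ and $y$ to satisfy $x + y + a = 0$; substituting $y = x + a$ then gives
\[
\sum_{u \in \Z_2^{2h}} \widehat{g}(u)^2\,(-1)^{u \cdot a} = 2^{2h} \sum_{x \in \Z_2^{2h}} (-1)^{g(x) + g(x+a)}.
\]
Next I would use the hypothesis that $g$ is bent, so that $\widehat{g}(u) \in \{2^h, -2^h\}$ and hence $\widehat{g}(u)^2 = 2^{2h}$ for every $u \in \Z_2^{2h}$. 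The left-hand side above then equals $2^{2h} \sum_{u \in \Z_2^{2h}} (-1)^{u \cdot a}$, which vanishes by~\eqref{indicator} because $a \ne 0$. Dividing through by $2^{2h}$ yields the proposition.

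I do not expect any real obstacle: the argument is just the discrete Wiener--Khinchin identity followed by one application of the defining property of bent functions. The only place needing a little care is the bookkeeping in the double sum --- checking that the substitution $y = x + a$ is a bijection of $\Z_2^{2h}$ and that, since all exponents lie in $\Z_2$, the signs combine additively --- but this is entirely routine.
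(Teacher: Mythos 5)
Your proof is correct and is essentially the same argument as the paper's: both evaluate $\sum_{u}\widehat{g}(u)^2(-1)^{u\cdot a}$ in two ways, once by expanding the Walsh transform and collapsing the $u$-sum via \eqref{indicator} to get $2^{2h}$ times the autocorrelation at $a$, and once using $\widehat{g}(u)^2=2^{2h}$ together with $a\ne 0$. The paper merely organizes the same computation by multiplying the pointwise identity $2^{2h}=|\widehat{g}(u)|^2$ by $(-1)^{u\cdot a}$ and summing over $u$, so no substantive difference.
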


\begin{proof}
Since $g(x)$ is bent, we have
\[
2^{2h} 
 = |\hat{g}(u)|^2 
 = \sum_{x,y \in \Z_2^{2h}}(-1)^{g(x)+u\cdot x} (-1)^{g(y)+u\cdot y} 
 = \sum_{x,b \in \Z_2^{2h}}(-1)^{g(x)+g(x+b)} (-1)^{u\cdot b}
\]
by setting $y = x+b$. Multiply the first and last expressions by $(-1)^{u \cdot a}$ and sum over $u \in \Z_2^{2h}$ to give
\[
2^{2h} \sum_{u \in \Z_2^{2h}} (-1)^{u\cdot a} 
 = \sum_{x,b \in \Z_2^{2h}}(-1)^{g(x)+g(x+b)} \sum_{u \in \Z_2^{2h}} (-1)^{u\cdot (a+b)}.
\]
The result follows by applying \eqref{indicator} to the sum over $u$ on both sides.
\end{proof}

\begin{lemma}
The $2^m$ vectors $\big\{\big((-1)^{u\cdot \ell}\big)_{u \in \Z_2^m} : \ell \in \Z_2^m\big\}$ are pairwise orthogonal, and therefore linearly independent over~$\R$.
\label{lem:lin-ind}
\end{lemma}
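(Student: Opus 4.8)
The plan is to prove orthogonality by a direct computation and then deduce linear independence by the standard argument. Fix $\ell, \ell' \in \Z_2^m$. Since all entries are real, the Hermitian inner product of $\big((-1)^{u\cdot\ell}\big)_{u\in\Z_2^m}$ and $\big((-1)^{u\cdot\ell'}\big)_{u\in\Z_2^m}$ is
\[
\sum_{u\in\Z_2^m}(-1)^{u\cdot\ell}(-1)^{u\cdot\ell'} = \sum_{u\in\Z_2^m}(-1)^{u\cdot(\ell+\ell')},
\]
which by the identity~\eqref{indicator} (with summation variable $w = u$ and $z = \ell+\ell'$) equals $2^m\,I[\ell+\ell'=0] = 2^m\,I[\ell=\ell']$. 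Hence any two distinct vectors in the family are orthogonal, and each vector has squared norm $2^m \ne 0$, so in particular no vector in the family is zero.

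Next I would invoke the elementary fact that a set of nonzero pairwise orthogonal vectors in a real inner product space is linearly independent: if $\sum_{\ell}c_\ell\,\big((-1)^{u\cdot\ell}\big)_u = \bf 0$, then taking the inner product with $\big((-1)^{u\cdot\ell_0}\big)_u$ and using the orthogonality just established yields $c_{\ell_0}\cdot 2^m = 0$, so $c_{\ell_0}=0$ for every $\ell_0 \in \Z_2^m$. This completes the proof.

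There is essentially no obstacle here: the computation is just the special case $g_j = g_k = 0$ of the orthogonality calculation already carried out in the proof of Proposition~\ref{prop:benttoMUBs}, with the roles of the index set and the evaluation points interchanged (here we sum over $u$ rather than over $x$). The only point requiring a moment's care is to apply~\eqref{indicator} with the correct choice of summation variable, so that the Kronecker-delta condition becomes $\ell + \ell' = 0$, i.e.\ $\ell = \ell'$.
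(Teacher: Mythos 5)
Your proof is correct and follows the same route as the paper: compute the inner product of two vectors indexed by $\ell$ and $\ell'$, apply the identity~\eqref{indicator} to get $2^m\,I[\ell=\ell']$, and conclude linear independence from pairwise orthogonality of nonzero vectors. The only difference is that you spell out the standard ``orthogonal nonzero vectors are independent'' argument, which the paper leaves implicit.
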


\begin{proof}
For distinct $k, \ell \in \Z_2^m$, we have
$
\anglebracket{\big((-1)^{u \cdot k}\big)_u, \big((-1)^{u \cdot \ell}\big)_u} =
\sum_{u \in \Z_2^m} (-1)^{u \cdot (k+\ell)} = 0
$
by~\eqref{indicator}.
\end{proof}

\section{Proof of Theorem~\ref{thm:fullrankfamily}}
\label{sec:proof}
\begin{proof}[Proof of Theorem~\ref{thm:fullrankfamily}]
 From Theorem~\ref{thm:kerdock}, there exists a bent set $\{g_1, g_2, \dots, g_{2^{2h-1}}\}$ on~$\Z_2^{2h}$ and we may assume $g_1 = 0$.
 From Proposition~\ref{prop:benttoMUBs}, this bent set gives a set of $2^{2h-1}+1$ MUBs in $\{1,-1\}^{2^{2h}}$, comprising $2^h$ times the standard basis together with the $2^{2h-1}$ bases
$\big\{\big((-1)^{g_j(x)+u\cdot x}\big)_{x \in \Z_2^{2h}} : u \in \Z_2^{2h}\big\}$ for $1 \le j \le 2^{2h-1}$. We shall show that these MUBs are strongly $\C$-unextendible.

Suppose, for a contradiction, that the vector $\big(A(x)\big)_{x \in \Z_2^{2h}} \in \C^{2^{2h}}$ is unbiased with respect to each vector of these MUBs.
By reference to $2^h$ times the standard basis, each $A(x)$ has magnitude~$1$.
By reference to the other $2^{2h-1}$ bases, for $1 \le j \le 2^{2h-1}$ and $u \in \Z_2^{2h}$ we have
\[
\Biggabs{\sum_{x \in \Z_2^{2h}} A(x) (-1)^{g_j(x)+u\cdot x}} = 2^h
\]
and squaring yields 
\[
\sum_{x, y \in \Z_2^{2h}} A(x)\conj{A(y)} (-1)^{g_j(x)+g_j(y)+u\cdot (x+y)} = 2^{2h}.
\]
The terms of this sum for which $x=y$ contribute
$\sum_{x \in \Z_2^{2h}}\abs{A(x)}^2 = \sum_{x \in \Z_2^{2h}}1 = 2^{2h}$, and therefore
\begin{equation}
\sum_{\stack{x, y \in \Z_2^{2h}}{x \ne y}} A(x)\conj{A(y)} (-1)^{g_j(x)+g_j(y)+u\cdot (x+y)} = 0
\quad \mbox{for $1 \le j \le 2^{2h-1}$ and $u \in \Z_2^{2h}$}.
\label{xnoty}
\end{equation}
Order the elements of $\Z_2^{2h}$ lexicographically, writing $x<y$ 
to mean that $x$ precedes $y$ in this ordering. Define
\[
a_{x,y} = \tfrac{1}{2} \left(A(x)\conj{A(y)}+A(y)\conj{A(x)}\right) = \Real\left(A(x)\conj{A(y)}\right)
\quad \mbox{for $x, y \in \Z_2^{2h}$}
\]
and
\[
m_{j,u,x,y} = (-1)^{g_j(x)+g_j(y)+u\cdot(x+y)} \quad 
\]
Then from \eqref{xnoty} we have
\[
\sum_{\stack{x,y \in \Z_2^{2h}}{x<y}} m_{j,u,x,y} a_{x,y} = 0 \quad \mbox{for $1 \le j \le 2^{2h-1}$ and $u \in \Z_2^{2h}$},
\]
which is a homogeneous linear system of $2^{2h-1}\cdot 2^{2h}=2^{4h-1}$ equations in the $\binom{2^{2h}}{2} = 2^{2h-1}(2^{2h}-1)$ real variables $(a_{x,y})_{x < y}$. We can represent this system in the form $M \ba = \bz$ where $M = (m_{j,u,x,y})$ is the $2^{4h-1} \times 2^{2h-1}(2^{2h}-1)$ real matrix whose rows are indexed by $(j,u)$ and whose columns are indexed by $(x,y)$ with $x < y$, and $\ba = (a_{x,y})_{x < y}$ is a vector of $2^{2h-1}(2^{2h}-1)$ real entries.

Partition the columns of $M$ into $2^{2h}-1$ submatrices $M_\ell$ of size $2^{4h-1} \times 2^{2h-1}$, where $M_\ell$ is given by
\[
M_\ell = (m_{j,u,x,\ell+x}) = \big((-1)^{g_j(x) + g_j(\ell+x) + u\cdot \ell}\big)
\quad \mbox{for $\ell \in (\Z_2^{2h})^*$}.
\]
The rows of $M_\ell$ are indexed by $(j,u)$,
and the columns are indexed by $(x, \ell+x)$ for the $2^{2h-1}$ values of $x \in \Z_2^{2h}$ satisfying $x < \ell+x$.

For each $\ell \in (\Z_2^{2h})^*$, the first $2^{2h}$ entries of each column of $M_\ell$ are given by the vector $(m_{1,u,x,\ell+x})_{u \in \Z_2^{2h}} = \big((-1)^{u\cdot \ell}\big)_{u \in \Z_2^{2h}}$ (independently of $x$), using $g_1=0$. 
The set of all $2^{2h}-1$ such vectors, as $\ell$ ranges over $(\Z_2^{2h})^*$, is linearly independent over $\R$ by Lemma~\ref{lem:lin-ind}, and therefore
\[
\rank(M) = \sum_{\ell \in (\Z_2^{2h})^*} \rank(M_\ell).
\]
We claim that
\[
\rank(M_\ell) = 2^{2h-1} \quad \mbox{for each $\ell \in (\Z_2^{2h})^*$}.
\]
It then follows that $\rank(M) = 2^{2h-1}(2^{2h}-1)$, so $M$ has full rank.  The homogeneous linear system $M \ba = \bz$ therefore has only the trivial solution
\[
a_{x,y} = 0 \quad \mbox{for all $x < y$}.
\]
Writing $A(x) = e^{i \theta(x)}$ (using that each $A(x)$ has magnitude~1), this implies by the definition of $a_{x,y}$ that $\cos(\theta(x)-\theta(y)) = 0$ for all $x < y$. This is possible only if the vector $(A(x))$ contains at most 2 entries, which contradicts that the vector $(A(x))$ contains $2^{2h} \ge 4$ entries.

To prove the claim we note that, for $\ell \in (\Z_2^{2h})^*$, the $2^{2h-1}$ rows of $M_\ell$ given by
\[
(m_{j,0,x,\ell+x})_{x < \ell+x} = \left((-1)^{g_j(x) + g_j(\ell+x)}\right)_{x < \ell+x}
\quad \mbox{for $1 \le j \le 2^{2h-1}$}
\]
are pairwise orthogonal and therefore linearly independent over~$\R$: for distinct $j, k$ we have
\begin{align*}
\sum_{x < \ell+x} m_{j,0,x,\ell+x}m_{k,0,x,\ell+x}
 &= \sum_{\stack{x \in \Z_2^{2h}}{x < \ell+x}} (-1)^{g_j(x)+g_k(x) + g_j(\ell+x)+g_k(\ell+x)} \\
 &= \frac{1}{2}\sum_{x \in \Z_2^{2h}} (-1)^{(g_j+g_k)(x) + (g_j+g_k)(\ell+x)} \\
 &= 0
\end{align*}
by Proposition~\ref{prop:balanced-derivative}, because $g_j + g_k$ is bent and $\ell \in (\Z_2^{2h})^*$.
\end{proof}


\begin{thebibliography}{MBGW14}

\bibitem[Aar06]{Aaronson-topten}
S.~Aaronson.
\newblock The ten most annoying questions in quantum computing, August 2006.
\newblock http://www.scottaaronson.com/blog/?p=112.

\bibitem[Aar14]{Aaronson-toptennew}
S.~Aaronson.
\newblock The {NEW} ten most annoying questions in quantum computing, May 2014.
\newblock http://www.scottaaronson.com/blog/?p=1792.

\bibitem[BB14]{Bennett-Brassard}
C.~H. Bennett and G.~Brassard.
\newblock Quantum cryptography: {P}ublic key distribution and coin tossing.
\newblock {\em Theoret. Comput. Sci.}, 560, Part 1:7--11, 2014.

\bibitem[Ben07]{Bengtsson-three-ways}
I.~Bengtsson.
\newblock Three ways to look at mutually unbiased bases.
\newblock In {\em Foundations of Probability and Physics --- 4}, volume 889 of
  {\em AIP Conf. Proc.}, pages 40--51. Amer. Inst. Phys., New York, 2007.

\bibitem[Car10]{Carlet}
C.~Carlet.
\newblock Boolean functions for cryptography and error correcting codes.
\newblock In Y.~Crama and P.L. Hammer, editors, {\em Boolean Models and Methods
  in Mathematics, Computer Science, and Engineering}, volume 134 of {\em
  Encyclopedia Math. and its Applications}, pages 257--397. Cambridge Univ.
  Press, Cambridge, UK, 2010.

\bibitem[CCKS97]{Calderbank-Z4-Kerdock}
A.~R. Calderbank, P.~J. Cameron, W.~M. Kantor, and J.~J. Seidel.
\newblock {$\Z_4$}-{K}erdock codes, orthogonal spreads, and extremal
  {E}uclidean line-sets.
\newblock {\em Proc. London Math. Soc. (3)}, 75(2):436--480, 1997.

\bibitem[CM16]{Carlet-Mesnager}
C.~Carlet and S.~Mesnager.
\newblock Four decades of research on bent functions.
\newblock {\em Des. Codes Cryptogr.}, 78(1):5--50, 2016.

\bibitem[CS73]{Cameron-Seidel}
P.~J. Cameron and J.~J. Seidel.
\newblock Quadratic forms over {$GF(2)$}.
\newblock {\em Nederl. Akad. Wetensch. Proc. Ser. A vol. $76$ = Indag. Math.},
  35:1--8, 1973.

\bibitem[DEBZ10]{Durt-survey}
T.~Durt, B.-G. Englert, I.~Bengtsson, and K.~\.{Z}yczkowski.
\newblock On mutually unbiased bases.
\newblock {\em Int. J. Quantum. Inf.}, 8:535--640, 2010.

\bibitem[DGS75]{DGS-bounds}
P.~Delsarte, J.~M. Goethals, and J.~J. Seidel.
\newblock Bounds for systems of lines, and {J}acobi polynomials.
\newblock {\em Philips Res. Repts}, 30:91--105, 1975.

\bibitem[GR09]{Godsil-Roy}
C.~Godsil and A.~Roy.
\newblock Equiangular lines, mutually unbiased bases, and spin models.
\newblock {\em European J. Combin.}, 30(1):246--262, 2009.

\bibitem[Gra09]{Grassl-icqft-slides}
M.~Grassl.
\newblock Unextendible mutually unbiased bases, July 2009.
\newblock Slide presentation at International Conference on Quantum Foundations
  and Technology: Frontier and Future, Shanghai,
  http://quantum.ustc.edu.cn/old/conference/program2009/ppt file/Markus
  Grassl\_Grassl\_UnextendibleMUBs.pdf.

\bibitem[Iva81]{Ivanovic-geometrical}
I.~D. Ivanovi{\'c}.
\newblock Geometrical description of quantal state determination.
\newblock {\em J. Phys. A}, 14(12):3241--3245, 1981.

\bibitem[JMM{\etalchar{+}}09]{Jaming-Pauli}
P.~Jaming, M.~Matolcsi, P.~M{\'o}ra, F.~Sz{\"o}ll{\H{o}}si, and M.~Weiner.
\newblock A generalized {P}auli problem and an infinite family of
  {MUB}-triplets in dimension 6.
\newblock {\em J. Phys. A}, 42(24):245305, 25, 2009.

\bibitem[JW]{Jedwab-Wiebe-MUBs-lines}
J.~Jedwab and A.~Wiebe.
\newblock Constructions of complex equiangular lines from mutually unbiased
  bases.
\newblock {\em Des. Codes Cryptogr.}
\newblock Accepted, 2015. arXiv:1408.5169.

\bibitem[Kan95]{Kantor-codes}
W.M. Kantor.
\newblock Codes, quadratic forms and finite geometries.
\newblock In {\em Different Aspects of Coding Theory}, volume~50 of {\em Proc.
  Symp. Appl. Math.}, pages 153--177. Amer. Math. Soc., 1995.

\bibitem[Ker72]{Kerdock}
A.~M. Kerdock.
\newblock A class of low-rate nonlinear binary codes.
\newblock {\em Information and Control}, 20:182--187; ibid. 21 (1972), 395,
  1972.

\bibitem[KR04]{Klappenecker-Rotteler}
A.~Klappenecker and M.~R{\"o}tteler.
\newblock Constructions of mutually unbiased bases.
\newblock In {\em Finite fields and applications}, volume 2948 of {\em Lecture
  Notes in Comput. Sci.}, pages 137--144. Springer, Berlin, 2004.

\bibitem[MBGW14]{Mandayam-Pauli}
P.~Mandayam, S.~Bandyopadhyay, M.~Grassl, and W.~Wootters.
\newblock Unextendible mutually unbiased bases from {P}auli classes.
\newblock {\em Quantum Inf. Comput.}, 14(9\&10):823--844, 2014.

\bibitem[MS86]{MacWilliams-Sloane}
F.J. MacWilliams and N.J.A. Sloane.
\newblock {\em The {T}heory of {E}rror-{C}orrecting {C}odes}.
\newblock North-Holland, Amsterdam, 1986.

\bibitem[MW12]{McNulty-Weigert}
D.~McNulty and S.~Weigert.
\newblock On the impossibility to extend triples of mutually unbiased product
  bases in dimension six.
\newblock {\em Int. J. Quantum Inf.}, 10(5):1250056, 11, 2012.

\bibitem[Sch60]{Schwinger}
J.~Schwinger.
\newblock Unitary operator bases.
\newblock {\em Proc. Nat. Acad. Sci. U.S.A.}, 46:570--579, 1960.

\bibitem[SHB{\etalchar{+}}12]{Spengler-entanglement}
C.~Spengler, M.~Huber, S.~Brierley, T.~Adaktylos, and B.~C. Hiesmayr.
\newblock Entanglement detection via mutually unbiased bases.
\newblock {\em Phys. Rev. A}, 86:022311, Aug 2012.

\bibitem[SPR04]{Saniga-Planat-Rosu}
M.~Saniga, M.~Planat, and H.~Rosu.
\newblock Mutually unbiased bases and finite projective planes.
\newblock {\em J. Opt. B Quantum Semiclass. Opt.}, 6(9):L19--L20, 2004.

\bibitem[Sz{\'a}16]{Szanto}
A.~Sz{\'a}nt{\'o}.
\newblock Complementary decompositions and unextendible mutually unbiased
  bases.
\newblock {\em Linear Algebra Appl.}, 496:392--406, 2016.

\bibitem[Sz{\"o}10]{Szollosi-two-parameter}
F.~Sz{\"o}ll{\H{o}}si.
\newblock A two-parameter family of complex {H}adamard matrices of order 6
  induced by hypocycloids.
\newblock {\em Proc. Amer. Math. Soc.}, 138(3):921--928, 2010.

\bibitem[Sz{\"o}11]{Szollosi-thesis}
F.~Sz{\"o}ll{\H{o}}si.
\newblock {\em Construction, Classification and Parametrization of Complex
  Hadamard Matrices}.
\newblock PhD thesis, Central European University, 2011.

\bibitem[Tha]{Thas}
K.~Thas.
\newblock Unextendible mutually unbiased bases (after {M}andayam,
  {B}andyopadhyay, {G}rassl and {W}ootters).
\newblock arXiv:1407.2778 [quant-ph].

\bibitem[WB05]{Wocjan-Beth}
P.~Wocjan and T.~Beth.
\newblock New construction of mutually unbiased bases in square dimensions.
\newblock {\em Quantum Inf. Comput.}, 5(2):93--101, 2005.

\bibitem[Wei13]{Weiner}
M.~Weiner.
\newblock A gap for the maximum number of mutually unbiased bases.
\newblock {\em Proc. Amer. Math. Soc.}, 141:1963--1969, 2013.

\bibitem[WF89]{Wootters-Fields}
W.~K. Wootters and B.~D. Fields.
\newblock Optimal state-determination by mutually unbiased measurements.
\newblock {\em Ann. Physics}, 191(2):363--381, 1989.

\bibitem[Zau99]{Zauner-thesis}
G.~Zauner.
\newblock {\em Quantendesigns: Grundz\"uge einer nichtkommutativen
  Designtheorie}.
\newblock PhD thesis, University of Vienna, 1999.

\end{thebibliography}

\newcommand{\etalchar}[1]{$^{#1}$}

\end{document}